\newcommand\tod{\xrightarrow{\;d\;}}
\newcommand\U{\mathrm u}
\newcommand\D{\mathrm d}
\newcommand\ath{$a$\nobreakdashes-th}
\DeclareMathOperator\geom{geom}
\DeclareMathOperator\poisson{Poisson}
\renewcommand\abs[1]{\lvert#1\rvert}
\algrenewcommand\algorithmicrequire{\textbf{Global variable:}}
\title{Progressive and rushed Dyck paths}
\author{Axel Bacher}
\date{May 21, 2026}
\begin{document}
\maketitle

\begin{abstract}
\noindent%
We call \emph{progressive paths} and \emph{rushed paths} two families of Dyck
paths studied by Asinowski and Jelínek, which have the same enumerating
sequence (OEIS entry A287709). We present a bijection proving this fact.
Rushed paths turn out to be in bijection with \emph{one-sided trees},
introduced by Durhuus and Ünel, which have an asymptotic enumeration involving
a stretched exponential. We show some asymptotic results and present a random
sampling procedure for rushed paths. We conclude by presenting several other
classes of related lattice paths and directed animals that may have similar
asymptotic properties.
\end{abstract}

\section{Introduction} \label{sec:intro}

Dyck paths---sequences of up and down steps starting and ending at~$0$
and staying at nonnegative height---are probably the most famous family of
lattice paths, one of the many combinatorial classes enumerated by the Catalan
numbers (OEIS entry \oeis{A000108}). The aim of this article is to study two
families of Dyck paths defined in the OEIS entry \oeis{A287709} and studied by
Asinowski in relation to certain rectangulation models \cite{asinowski}. We
define these paths below and illustrate them in Figure~\ref{fig:defs}.
Throughout the paper, we call \emph{height} of the path~$P$, and we denote
by~$h(P)$, the maximum height visited by~$P$.

\begin{definition} \label{def}
Let~$P$ be a Dyck path of height~$h$. We say that~$P$ is \emph{progressive}
if, for~$i = 2,\dotsc,h$, it visits the height~$i-1$ at least twice before the
first visit at height~$i$. We say that~$P$ is \emph{rushed} if it starts
with~$h$ up steps and then never again visits the height~$h$.
\end{definition}

\begin{figure}[ht]\hfill%
\begin{tikzpicture}[paths]
\foreach \y in {0,...,4} { \draw[help lines] (0,\y) -- ++(26,0); }
\draw[path] (0,0)
\walk u; \point{color1}
\walk du; \point{color2}
\walk u; \point{color1}
\walk du; \point{color2}
\walk duu; \point{color1}
\walk ddduduuu; \point{color2}
\walk u; \point{color1}
\walk dudddudd;;
\end{tikzpicture}\hfill\hfill%
\begin{tikzpicture}[paths]
\foreach \y in {0,...,4} { \draw[help lines] (0,\y) -- ++(26,0); }
\draw[path] (0,0) \walk uuuudduddududduduuududdudd;;
\end{tikzpicture}\hfill\hspace{0pt}%
\caption{Left: a progressive path of height~$4$ with the first and second
visits at each height marked with a red and blue dot, respectively. Right: a
rushed path of height~$4$. Note that progressive paths are allowed to visit
their maximal height multiple times, while rushed paths are not.}
\label{fig:defs}
\end{figure}

A rushed path can be described as a run of up steps followed by a
right-to-left \emph{culminating path}, as defined by Bousquet-Mélou and Ponty
\cite{mbm-ponty} (a culminating path is a path that visits only nonnegative
heights and visits its final height only once). This means that the
enumeration of rushed paths of a given height is easily derived from that of
culminating paths. The addition of a run of up steps can seem like a trivial
difference; however, when considering paths irrespective of height, it makes a
significant difference in the asymptotic enumeration.

A remarkable fact, due to Asinowski and Jelínek and stated in the OEIS entry
mentioned above, is that progressive and rushed paths, when counted according
to just length, have the same enumerating sequence (this is not true when
taking height into account). This calls for a bijection, but such a bijection
has not been found so far. We present such a bijection and explain how
it behaves with respect to the height statistic.

Rushed paths, as it turns out, have already been studied by Durhuus and Ünel
in the guise of \emph{one-sided trees}, defined as plane trees with a height
equal to the length of their leftmost branch \cite{durhuus2}; upon adding a
leaf on the leftmost branch, the contour path of such trees are rushed paths
(Figure~\ref{fig:trees}, right). Their enumerating sequence exhibit a
remarkable asymptotic estimate of the form~\smash{$4^ne^{-\nu
n^{1/3}}n^{-5/6}$}. This sort of \emph{stretched exponential} has attracted
recent attention in many combinatorial contexts
\cite{guttmann,chang,price,durhuus}, but it highly unusual for lattice paths,
which usually have a form~$\mu^nn^\gamma$, with~$\gamma$ normally~$0$, $-1/2$
or~$-3/2$ in one dimension~\cite{banderier}. Culminating paths, for their
part, asymptotically number~$2^{n-2}/n$~\cite[Proposition~4.1]{mbm-ponty}.
Pushed Dyck paths also have the stretched exponential, but involve weights on
paths rather than being a proper subset (see~\cite{guttmann},
or~\cite{durhuus} for their tree avatars).

\begin{figure}[ht]
\hfill%
\begin{tikzpicture}[trees]
\coordinate (a0) at (0,1);
\coordinate (a1) at (1,2);
\coordinate (a2) at (2,2);
\coordinate (a3) at (3,3);
\coordinate (a4) at (3.000000,2);
\draw[path] (a3) -- (a4);
\coordinate (a5) at (2.000000,1);
\draw[path] (a1) -- (a5);
\draw[path] (a2) -- (a5);
\draw[path] (a4) -- (a5);
\coordinate (a6) at (4,1);
\coordinate (a7) at (5,4);
\coordinate (a8) at (6,4);
\coordinate (a9) at (5.500000,3);
\draw[path] (a7) -- (a9);
\draw[path] (a8) -- (a9);
\coordinate (a10) at (5.500000,2);
\draw[path] (a9) -- (a10);
\coordinate (a11) at (7,2);
\coordinate (a12) at (6.250000,1);
\draw[path] (a10) -- (a12);
\draw[path] (a11) -- (a12);
\coordinate (a13) at (3.125000,0);
\draw[path] (a0) -- (a13);
\draw[path] (a5) -- (a13);
\draw[path] (a6) -- (a13);
\draw[path] (a12) -- (a13);
\draw[path] (a13) -- ++(0,-.5);
\end{tikzpicture}%
\hfill\hfill%
\begin{tikzpicture}[trees]
\coordinate (a0) at (0,4);
\coordinate (a1) at (0.000000,3);
\draw[path] (a0) -- (a1);
\coordinate (a2) at (1,3);
\coordinate (a3) at (0.500000,2);
\draw[path] (a1) -- (a3);
\draw[path] (a2) -- (a3);
\coordinate (a4) at (2,2);
\coordinate (a5) at (3,2);
\coordinate (a6) at (1.750000,1);
\draw[path] (a3) -- (a6);
\draw[path] (a4) -- (a6);
\draw[path] (a5) -- (a6);
\coordinate (a7) at (4,1);
\coordinate (a8) at (5,3);
\coordinate (a9) at (6,3);
\coordinate (a10) at (5.500000,2);
\draw[path] (a8) -- (a10);
\draw[path] (a9) -- (a10);
\coordinate (a11) at (7,2);
\coordinate (a12) at (6.250000,1);
\draw[path] (a10) -- (a12);
\draw[path] (a11) -- (a12);
\coordinate (a13) at (4.000000,0);
\draw[path] (a6) -- (a13);
\draw[path] (a7) -- (a13);
\draw[path] (a12) -- (a13);
\draw[path] (a13) -- ++(0,-.5);
\end{tikzpicture}%
\hfill\mbox{}%
\caption{The plane trees corresponding to the two Dyck paths of
Figure~\ref{fig:defs}. The tree on the left has the property that, for any
leaf of height~$i\ge2$, there is a leaf of height~$i-1$ to the left.}
\label{fig:trees}
\end{figure}

The paper is organized as follows. In Section~\ref{sec:bijections}, we
describe our bijection between rushed and progressive paths. In
Section~\ref{sec:enum}, we give exact and asymptotic enumeration results.
In Section~\ref{sec:random}, we give a random sampling algorithm for rushed
paths. Finally, in Section~\ref{sec:perspectives}, we give some perspectives,
including classes of directed animals linked to progressive paths.

\section{Bijections} \label{sec:bijections}

\subsection{Culminating progressive paths}

Before defining our bijection between rushed and progressive paths, we need
another bijection, which goes to Dyck paths to \emph{progressive culminating
paths} (culminating paths satisfying the same constraint as progressive
paths). Let~$P$ be a Dyck path of height~$h$, written as a word
on~$\{\U,\D\}$. Decompose it as:
\begin{equation} \label{f}
P = A_0\U\dotsm A_{h-1}\U\cdot A_{h}\cdot
\D B_{h-1}\dotsm\D B_0
\end{equation}
where the~$A_i$'s and~$B_i$'s are downward Dyck paths of height at most~$i$.
Let~$F(P)$ be the path:
\begin{equation} \label{F}
F(P) = A_0\U\cdot \D B_0\U A_1\U \dotsm\D B_{h-1}\U A_h\U\text.
\end{equation}
The decompositions of~$P$ and~$F(P)$ are illustrated in Figure~\ref{fig:F}
(the factors~$A_0$ and~$B_0$ are not shown since they are always empty).

\begin{figure}[ht]\hfill%
\begin{tikzpicture}[paths]
\foreach \y in {0,...,3} { \draw [help lines] (0,\y) -- ++(26,0); }
\draw[path] (0,0) \ustep
\dyck{-1}{color1}{$A_1$} \ustep
\dyck{-2}{color1}{$A_2$} \ustep
\dyck{-3}{color1}{$A_3$} 
\dstep \dyck{-2}{color2}{$B_2$} 
\dstep \dyck{-1}{color2}{$B_1$} 
\dstep;
\end{tikzpicture}%
\hfill\hfill%
\begin{tikzpicture}[paths]
\foreach \y in {0,...,4} { \draw [help lines] (0,\y) -- ++(30,0); }
\draw[path] (0,0) \ustep\dstep\ustep
\dyck{-1}{color1}{$A_1$} \ustep\dstep
\dyck{-1}{color2}{$B_1$} \ustep
\dyck{-2}{color1}{$A_2$} \ustep\dstep
\dyck{-2}{color2}{$B_2$} \ustep
\dyck{-3}{color1}{$A_3$} \ustep;
\end{tikzpicture}\hfill\hspace{0pt}%
\caption{Left: a Dyck path of height~$3$ decomposed as in~\eqref{f}. Right:
its image by~$F$, a progressive culminating path of height~$4$ decomposed as
in~\eqref{F}.} \label{fig:F}
\end{figure}

\begin{theorem} \label{thm:F}
The mapping~$F$ is a bijection from Dyck paths of length~$2n$ and
height~$h$ to progressive culminating paths of length~$2n+h+1$ and
height~$h+1$.
\end{theorem}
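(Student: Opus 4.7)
My plan is to show $F$ is a bijection by producing an explicit inverse and checking that both directions yield objects in the required class.

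First I would verify the elementary bookkeeping: the explicit steps in~\eqref{F} contribute $2h+1$ up steps and $h$ down steps, and since each $A_i$ and $B_i$ is balanced, the semilength $n$ of $P$ produces a path of total length $2n+h+1$. Tracking heights piece by piece, the path reaches height $i+1$ at the end of the $i$-th piece ($A_0u$ for $i=0$ and $dB_{i-1}uA_iu$ for $i\geq 1$), stays nonnegative because each $A_j$ or $B_j$ is a downward Dyck path of height at most $j$, and attains the maximum height $h+1$ only at the very last step. This establishes the culminating property. Progressivity follows because, for each $i\geq 1$, the middle $u$ of $dB_{i-1}uA_iu$ is a second visit to height $i$ occurring strictly before the first visit to height $i+1$.

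Next I would describe the inverse. Given a progressive culminating path $Q$ of height $h+1$, let $\sigma_1<\sigma_2<\cdots<\sigma_{h+1}$ be the positions of the first visits to heights $1,2,\ldots,h+1$, and let $S_i$ denote the segment of $Q$ ending at $\sigma_{i+1}$ (setting $\sigma_0=0$). The segment $S_0$ reads as $A_0u$ with $A_0$ a downward Dyck path at height $0$. For each $i\geq 1$, I would argue that the first step of $S_i$ must be $d$ (otherwise the next step would reach $i+1$ without a second visit to $i$, contradicting progressivity); that between this initial $d$ and the first subsequent return to height $i$ the path is confined to heights $\leq i-1$, extracting a downward Dyck path $B_{i-1}$ of height at most $i-1$; and that the remaining portion of $S_i$ before its final $u$ stays at heights $\leq i$, extracting a downward Dyck path $A_i$ of height at most $i$. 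Reassembling these pieces via~\eqref{f} yields a Dyck path $P$ of length $2n$ and height $h$, and the two constructions are inverse to each other by their block-wise definitions.

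The main obstacle is the careful justification that this inverse decomposition really has the claimed properties; that is, that the progressive and culminating conditions together force the block structure to match~\eqref{F}. The culminating hypothesis guarantees that the first-visit positions $\sigma_i$ are well-defined, distinct, and in the right order; the progressive hypothesis is exactly what is needed to force the initial $d$ of each $S_i$ and to place the boundary between $B_{i-1}$ and $A_i$ at the second visit to height $i$. Once these structural facts are in hand, the verification that $F\circ F^{-1}$ and $F^{-1}\circ F$ are identities is a routine comparison of block decompositions.
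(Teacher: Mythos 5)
Your proof is correct and follows essentially the same route as the paper's: verify that each factor $dB_{i-1}uA_iu$ carries the path from the first visit at height~$i$ to the first visit at height~$i+1$ while revisiting height~$i$ (giving the culminating and progressive properties), then invert by cutting at the first and second visits at each height to recover the blocks of~\eqref{f}. Your version simply spells out the length count and the inverse in more detail than the paper does.
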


\begin{proof}
First, we check that~$F(P)$ is a culminating progressive path. By
construction, the factor~$\D B_{i-2}\U A_{i-1}u$ goes from the first visit at
height~$i-1$ to the first visit at height~$i$; it never visits negative height
and visits the height~$i-1$ at least twice, so~$F(P)$ is culminating and
progressive.

Second, the decomposition \eqref{F} can be recovered from~$F(P)$ by cutting at
the first and second visits at height~$i$, for~$i = 1,\dotsc,h-1$, which
allows us to rebuild the path~$P$. This shows that~$F$ is a bijection.
\end{proof}

\subsection{Progressive paths}

We are now ready to define our main bijection. Let~$P$ be a rushed path of
height~$h$. Decompose it as:
\begin{equation} \label{g}
P = \U^h\cdot\D C_0\dotsm\D C_{h-1}
\end{equation}
where the~$C_i$'s are Dyck paths of height at most~$i$. Let~$m$ be the maximum
height of~$C_i$ for~$i = 0,\dotsc,h-1$ and let~$j$ be the smallest index with
$h(C_j) = m$ (necessarily, we have~$j\ge m$). Let~$G(P)$ be the path:
\begin{equation} \label{G}
G(P) = F(C_j) \cdot C_0\D\dotsm C_{m-1}\D\cdot
\U C_{m}\D\dotsm\U C_{j-1}\D\cdot\D\cdot
\U C_{j+1}\D\dotsm\U C_{h-1}\D\text.
\end{equation}
The decompositions \eqref{g} and \eqref{G} are illustrated in
Figure~\ref{fig:G}.

\begin{figure}[ht]\centering%
\begin{tikzpicture}[paths]
\foreach \y in {0,...,10} { \draw [help lines] (0,\y) -- ++(56,0); }
\draw[path] (0,0) coordinate (here) 
\ustep\ustep\ustep\ustep\ustep\ustep\ustep\ustep\ustep\ustep
\dstep
\dstep \dyck{1}{color2}{$C_1$}
\dstep \dyck{2}{color2}{$C_2$}
\dstep \dyck{3}{color2}{$C_3$}
\dstep \dyck{3}{color3}{$C_4$}
\dstep \dyck{3}{color3}{$C_5$}
\dstep \dyck{4}{color1}{$C_6$}
\dstep \dyck{4}{color4}{$C_7$} 
\dstep \dyck{4}{color4}{$C_8$} 
\dstep \dyck{4}{color4}{$C_9$} ;
\end{tikzpicture}\\[2em]
\begin{tikzpicture}[paths]
\foreach \y in {0,...,5} { \draw [help lines] (0,\y) -- ++(56,0); }
\path (0,0) coordinate (here) 
\culm{color1}{$F(C_6)$} 
\dstep
\dyck{1}{color2}{$C_1$} \dstep
\dyck{2}{color2}{$C_2$} \dstep
\dyck{3}{color2}{$C_3$} \dstep
\ustep\dyck{3}{color3}{$C_4$}\dstep
\ustep\dyck{3}{color3}{$C_5$}\dstep
\dstep
\ustep \dyck{4}{color4}{$C_7$} \dstep
\ustep \dyck{4}{color4}{$C_8$} \dstep
\ustep \dyck{4}{color4}{$C_9$} \dstep ;
\end{tikzpicture}
\caption{Above: a rushed Dyck path of height~$10$ decomposed as in~\eqref{g},
where~$m = 4$ and~$j = 6$ (so~$C_6$ has height exactly~$4$, while~$C_7$, $C_8$
and~$C_9$ have height at most~$4$). Below: its image by~$G$, a progressive
Dyck path of height~$5$ decomposed as in~\eqref{G}.}
\label{fig:G}
\end{figure}

\begin{theorem} \label{thm:G}
The mapping~$G$ is a bijection from Dyck paths of length~$2n$
to progressive Dyck paths of length~$2n$.
\end{theorem}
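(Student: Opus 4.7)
The plan is to show that $G(P)$ is a progressive Dyck path of length $2n$, and then construct an explicit inverse by exploiting the culminating structure of the prefix $F(A_j)$.

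For the forward direction, I first check the length: summing the contributions of the factors in \eqref{G}, using $|F(A_j)|=|A_j|+m+1$ from Theorem~\ref{thm:F}, and noting that $|P|=2h+\sum_i|A_i|$ from \eqref{g}, one obtains $|G(P)|=2n$. Next I track the height profile block by block. After $F(A_j)$ the walk stands at its peak $m+1$; the descending group $A_0 d\dotsm A_{m-1}d$ brings it down to height $1$, with each $A_i$ staying in $[m+1-i,\,m+1]$ thanks to $h(A_i)\le i$; each block $uA_id$ with $m\le i<j$ is an excursion from height $1$ bounded above by $m+1$, since $h(A_i)<m$ by minimality of~$j$; the lone $d$ drops to height~$0$; and each block $uA_id$ with $i>j$ is an excursion from height~$0$ bounded by $m+1$, since $h(A_i)\le m$. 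Thus $G(P)$ is a Dyck path of height $m+1$. Because $F(A_j)$ is culminating, its final point is the \emph{first} time height $m+1$ is reached, so every height $1,\dotsc,m+1$ is first reached inside $F(A_j)$. Progressivity of $G(P)$ then follows from progressivity of $F(A_j)$, granted by Theorem~\ref{thm:F}.

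To build $G^{-1}$, suppose $Q$ is a progressive path of height $h'$ and set $m=h'-1$. Cut $Q$ at the first visit to height $h'$; the prefix $Q_1$ is a progressive culminating path of height $h'$, so by Theorem~\ref{thm:F} it equals $F(A)$ for a unique Dyck path $A$ of height $m$, and I set $A_j:=A$. The suffix $Q_2$ starts at height $m+1$, stays in $[0,m+1]$, and ends at $0$. Cutting $Q_2$ at its first visits to heights $m,m-1,\dotsc,1$ isolates the initial descent $A_0 d\dotsm A_{m-1}d$ and recovers $A_0,\dotsc,A_{m-1}$. From the first visit of $Q_2$ to height $1$ onward, the walk consists of excursions strictly above $1$ separated by returns to $1$, then a single down step to $0$, then excursions strictly above $0$; this reads off the blocks $uA_id$ for $m\le i<j$ (determining $j$) and $uA_id$ for $j<i\le h-1$ (determining $h$), and so recovers all remaining $A_i$. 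The collection $(A_0,\dotsc,A_{h-1})$ encodes via \eqref{g} a unique rushed path $P$ satisfying $G(P)=Q$.

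The main obstacle is checking that the parsing of $Q_2$ is unambiguous and that the extracted blocks obey the rushed-decomposition constraints---namely $h(A_i)\le i$ for every $i$, $h(A_j)=m=\max_i h(A_i)$, and $h(A_i)<m$ for $i<j$. This amounts to careful bookkeeping anchored at the first visits of $Q_2$ to heights $m+1,m,\dotsc,0$, but it is essentially forced by the height-profile analysis used in the forward direction. Once this is in place, $G\circ G^{-1}=\mathrm{id}$ and $G^{-1}\circ G=\mathrm{id}$ follow by directly comparing the block decompositions on the two sides.
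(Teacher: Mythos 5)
Your proof is correct and follows essentially the same route as the paper: the forward direction via the length count, the height bounds on the $A_i$, and progressivity inherited from the culminating prefix $F(A_j)$ (Theorem~\ref{thm:F}); and invertibility by cutting at the first visit to the maximal height, then at the first visits to $m,\dotsc,1$, then at the returns to heights $1$ and $0$. You are in fact slightly more explicit than the paper about surjectivity---defining $G^{-1}$ on an arbitrary progressive path and naming the block constraints ($h(A_i)\le i$, minimality of $j$, etc.) that must be verified---and that deferred bookkeeping is indeed routine and forced by the height-profile analysis, so no gap remains.
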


\begin{proof}
First, we check that~$G(P)$ is a progressive path of height~$m+1$ and
length~$2n$. The factor~$F(C_k)$ is a progressive culminating path of
height~$m+1$ by Theorem~\ref{thm:F}; since the factors~$C_i$ have height at
most~$i$ for~$i = 0,\dotsc,m-1$, at most~$m-1$ for~$i = m,\dotsc,k-1$ and at
most~$m$ for~$i = k,\dotsc,h-1$, the rest of the path never visits heights
above~$m+1$. Finally, the length of~$G(P)$ is~$\lvert P\rvert + \abs{F(C_j)} -
\abs{C_j} - m - 1$, which is~$2n$ by Theorem~\ref{thm:F}.

Second, the decomposition \eqref{G} and thus the path~$P$ can be recovered
from the path~$G(P)$ by cutting at the first visit at height~$m+1$, at the
first visit at height~$i$ thereafter for~$i=m,\dotsc,2$, at every visit at
height~$1$ thereafter before the first visit at height~$0$ and at every visit
at height~$0$ thereafter. This shows that~$G$ is a bijection.
\end{proof}

\section{Enumeration and asymptotics} \label{sec:enum}

\subsection{Notations and classical results}

Let~$p_n = r_n$ be the number of progressive or rushed paths of
length~$2n$. Let~$p_{n,h}$, $r_{n,h}$ and~$d_{n,h}$ be the number of
progressive paths, rushed paths and Dyck paths, respectively, of
length~$2n$ and height~$h$. Also let~$c_{n,h}$ be the number of culminating
paths of length~$n$ and height~$h$ (culminating paths have odd length when~$h$
is odd).  Let~$R(z) = P(z)$, $R_h(z)$, $P_h(z)$, $D_h(z)$ and~$C_h(z)$ be the
generating functions of these sets, where the variable~$z$ counts the
semilength (so~$C_h(z)$ involves powers of~$z^{1/2}$ when~$h$ is odd).

We denote by~$F_h(z)$ be the Fibonacci polynomials, defined by~$F_0(z) =
F_1(z) = 1$ and~$F_h(z) = F_{h-1}(z) - zF_{h-2}(z)$ for~$h\ge2$. Also let~$q
:= q(z)$ be the generating function of the Catalan numbers, satisfying~$q =
z(1+q)^2$. We have the formula:

We now state some classical enumeration results for Dyck paths of bounded
height and culminating paths. First, we have:
\begin{equation} \label{DC}
D_h(z) = \frac{z^{h}}{F_h(z)F_{h+1}(z)}
\quad\text{and}\quad
C_h(z) = \frac{z^{h/2}}{F_h(z)}
\text,
\end{equation}
where~$F_h(z)$ are the Fibonacci polynomials defined by~$F_0(z) = F_1(z) = 1$
and~$F_h(z) = F_{h-1}(z) - zF_{h-2}(z)$ if~$h\ge2$. These polynomials also
have the following form in terms of the generating function~$q := q(z)$ of
Catalan numbers, satisfying~$q = z(1+q)^2$:
\begin{equation} \label{fibo}
F_h(z) = \frac{1 - q^{h+1}}{(1-q)(1+q)^h}
\text.
\end{equation}
We also have a closed form on the number of culminating paths of height~$h$.
Shifting the index~$h$ to get a nicer form, we have, if~$n$ and~$h-1$ have the
same parity:
\begin{equation} \label{c}
c_{n,h-1} = \frac{2^{n+1}}{h}
\sum_{j=1}^{\floor{\frac{h-1}{2}}}
(-1)^{j+1}
\sin^2\frac{j\pi}{h} \cos^{n-1}\frac{j\pi}{h}
\text.
\end{equation}
The formula for~$D_h(z)$ can be found in~\cite{bruijn}, while the one
for~$C_h(z)$ is in~\cite{mbm-ponty} (with~$q = U^2$ in Bousquet-Mélou and
Ponty's notation). The formula~\eqref{c} can be derived with a simple fraction
decomposition as explained in~\cite{bruijn}; alternatively, it is a
consequence of Laplace's formula for the gambler's ruin problem~\cite{feller}.

\subsection{Rushed paths}

As stated in Section~\ref{sec:intro}, the results of~\cite{durhuus2} on
one-sided trees translate to rushed paths. Accordingly, the results below are
essentially specializations of these results (with~$\mu = 0$ in the notation
of the paper to get the uniform measure).

\begin{theorem} \label{thm:rushed}
The generating function of rushed paths of height~$h$ is:
\begin{equation} \label{Rh}
R_h(z) = \frac{z^{h}}{F_h(z)}\text.
\end{equation}
The generating function of rushed paths is:
\begin{equation} \label{R}
R(z) = \sum_{h\ge0}
\frac{(1-q)q^h}{(1-q^{h+1})(1+q)^h}
\text.
\end{equation}
The number of rushed paths of length~$2n$ and height~$h-1$ is:
\begin{equation} \label{rh}
r_{n,h-1} = \frac{4^{n+1}}{2^hh}
\sum_{j=1}^{\floor{\frac{h-1}{2}}}
(-1)^{j+1} \sin^2\frac{j\pi}{h} \cos^{2n-h}\frac{j\pi}{h}\text.
\end{equation}
The number of rushed paths of length~$2n$ satisfies, as~$n$ tends to
infinity:
\begin{equation} \label{r}
r_n = \lambda\mkern1mu4^ne^{-\nu n^{1/3}}n^{-5/6}
\left[1 + O\left(n^{-1/3}\right)\right]
\end{equation} 
where~$\lambda = (4\pi)^{5/6}(\log2)^{1/3}/\sqrt3$ and~$\nu =
3(\pi\log2\mkern2mu/2)^{2/3}$.
\end{theorem}

\begin{proof}
The identities~\eqref{Rh}, \eqref{R} and~\eqref{rh} follow from
the fact that~$r_{n,h} = c_{2n-h,h}$. The estimate \eqref{r} is Theorem~5.1
in~\cite{durhuus2}, after accounting for the shift in size between rushed
paths and one-sided trees (the essence of the argument is to note
that~$r_{n,h}$ takes its maximum when~$h\sim(2\pi^2n/\log2)^{1/3}$ and use
Laplace's method to estimate~$r_n$).
\end{proof}

Finally, we state some useful limit law results. Let~$\mathfrak R_n$ be a
uniformly distributed rushed path of length~$2n$. If~$P$ is a Dyck path, we
call \emph{arch} of~$P$ the factor between two consecutive visits at
height~$0$ and we denote by~$a(P)$ the number of arches of~$P$.

\begin{theorem} \label{thm:rushedlaws}
We have, as~$n$ tends to infinity:
\begin{equation} \label{normal}
\frac{h(\mathfrak R_n) - \mu n^{1/3}}{\sigma n^{1/6}}\tod\mathcal N(0,1)
\end{equation}
where~$\mu = (2\pi^2/\log 2)^{1/3}$ and~$\sigma =
(2\pi^2)^{1/6}/\left((\log2)^{2/3}\sqrt3\right)$. We also have:
\begin{equation} \label{arches}
a(\mathfrak R_n)\tod\mathcal A
\quad\text{where}\quad
\mathbf E\left[u^{\mathcal A}\right] = \frac u{2-u}2\big.^{\strutfrac{u}{2-u}-1}
\text.
\end{equation}
\end{theorem}

\begin{proof}
The limit law \eqref{normal} is implied by Laplace's method in the proof of
Theorem~5.1 in \cite{durhuus2}.

To show \eqref{arches}, we first note that number of arches in the Dyck path
translates to number of children of the root vertex in the plane tree. The
information about that number is contained in the local limit of the tree
(Proposition~5.6 in \cite{durhuus2}). That limit shows that the root vertex
has~$R$ children belonging to the ``spine'', where~$R-1$ is distributed
like~$\poisson(\log2)$, and~$R$ critical Bienaymé-Galton-Watson trees grafted
in between, each accounting for~$\geom(1/2)$ more children (see the
illustration in Figure~3 in~\cite{durhuus2}). This gives the probability
generating function~\eqref{arches}.

Incidentally, this local limit gives us an intuitive picture of the arches
of~$\mathfrak R_n$: it has~$R$ large arches, looking at the limit like a
Poisson point process, and a geometric number of small arches in between each.
\end{proof}

\subsection{Progressive paths}

We conclude this section with our own contributions concerning progressive
paths. By the bijection~$G$, $P(z) = R(z)$ and~$p_n = r_n$ satisfy~\eqref{R}
and~\eqref{r}.

\begin{theorem} \label{thm:progressive}
The generating function of progressive paths of height~$h$ is:
\begin{equation} \label{Ph}
P_h(z) = \frac{z^{2h-1}}{F_{h-1}(z)F_h(z)F_{h+1}(z)}\text.
\end{equation}
Moreover, we have:
\begin{equation} \label{delta}
h(\mathfrak R_n) - h\bigl(G(\mathfrak R_n)\bigr)
\tod\mathcal D\quad\text{where}\quad
\mathbf E\left[u^{\mathcal D}\right] = \frac1u\left(1 +
\frac{4(u-1)}{2-u}2\big.^{\strutfrac{u}{2-u} - 1}\right)
\text.
\end{equation}
In particular, the height of a random progressive path of length~$2n$
also has the limit law~\eqref{normal}.
\end{theorem}

\begin{proof}
Progressive paths of height~$h$ are the product of a progressive culminating
path of height~$h$ (generating function~$z^{h/2}D_{h-1}(z)$ by
Theorem~\ref{thm:F}) and a downward culminating path of height~$h+1$, minus
the last downstep (generating function~$C_{h+1}(z)/z^{1/2}$). This
gives~\eqref{Ph}.

It remains to prove that~$\Delta_n := h(\mathfrak R_n) - h(G(\mathfrak R_n))$
tends to the limit law~$\mathcal D$. If~$h(\mathfrak R_n) = h$,
decompose~$\mathfrak R_n$ as in~\eqref{g}; according to Theorem~\ref{thm:G},
we have~$\Delta_n \ge k$ if and only if all the~$A_i$'s have height at
most~$h-k-1$. If this is the case, consider the path:
\[\U^{h-k+1}\cdot\D C_0\dotsm\D C_{h-k-1}\cdot\D \cdot\U C_{h-k}\D\dotsm\U
C_{h-1}\D\text.\]
This is a rushed path of length~$2n+2$ and height~$h-k+1$ with~$a+1$ arches,
and the transformation is bijective. Denoting by~$r_{n,h,a}$ the number of
rushed paths with length~$2n$, height~$h$ and~$a$~arches, we have
therefore:
\[\mathbf P\bigl(\Delta_n \ge k \mathbin{\big\vert} h(\mathfrak R_n) = h\bigr)
= \frac{r_{n+1,h-k+1,k+1}}{r_{n,h}}\text.\]
Fix~$k$ and let~$n$ tend to infinity. With~\eqref{rh}, expanding the sine and
cosine, we get:
\[r_{n,h} \sim
\frac{\pi^24^{n+1}}{2^hh^3}e^{-\strutfrac{\pi^2n}{h^2}}\text.\]
Since~$h\sim\mu n^{1/3}$ with high probability from~\eqref{normal}, this shows
that we have the form~$r_{n,h} \sim \kappa 4^n\smash{e^{-\nu n^{1/3}}n^{-1}}$
for some constant~$\kappa$ and therefore that the
ratio~$r_{n+1,h-k+1}/r_{n,h}$ tends to~$4$ with high probability. Therefore,
we have from~\eqref{arches}:
\[\mathbf P(\Delta_n \ge k) \to
4\mspace{1mu}\mathbf P(\mathcal A = k+1)\text.\]
This shows convergence in distribution to~$\mathcal D$.
\end{proof}

\section{Random sampling} \label{sec:random}

In this section, we consider the problem of finding an algorithm returning a
uniformly distributed rushed path of length~$2n$ (a random progressive path
can then simply be obtained using the bijection~$G$).
The best algorithms for sampling lattice paths are usually based on
anticipated rejection: paths are drawn randomly one step at a time and those
paths that do not satisfy the wanted constraints, like staying above the
$x$-axis, are rejected during the generation. This is the idea behind the
Florentine algorithms \cite{florentine,florentine2} and the algorithms in
\cite[Section~6.2]{mbm-ponty}. In favorable cases, such algorithms often
achieve linear complexity in the size of the output \cite{rejection}.

In our case, however, using such a scheme seems difficult. If we sample a
rushed path of length~$2n$ and height~$h$ by outputting~$h$ up steps
followed by a random path, the probability of sampling any given path without
rejecting is~$2^{-2n+h}$, but we need a probability independent of~$h$ to
ensure uniformity. Moreover, a random path of length~$2n$ will have a
height~$O(\sqrt n)$, but we need it to be~$O(n^{1/3})$. This seems to show
that the rejection rate will be prohibitive.

We instead present an algorithm running in two stages: first, we sample the
height~$h$ of the path with the appropriate random distribution. Second, we
draw a uniformly distributed path of length~$2n$ and height~$h$. We present
both stages below, starting with the second.

\subsection{Rushed paths of fixed height}

Let~$\mathcal D_{h,i,j,\ell}$ be the set of paths of length~$\ell$, starting
at height~$i$, ending at~$j$ and staying at heights from~$0$ to~$h-1$. By
considering the height at the middle point of the path, we have
when~$\ell\ge1$:
\begin{equation} \label{D}
\mathcal D_{h,i,j,\ell} = \bigcup_{k=0}^{h-1}
\mathcal D_{h,i,k,\floor{\frac\ell2}}
\mathcal D_{h,k,j,\ceil{\frac\ell2}}\text.
\end{equation}
Let~$d_{h,i,j,\ell} = \#\mathcal D_{h,i,j,\ell}$. Provided we can
compute~$d_{h,i,j,\ell}$, we can devise an \emph{unranking} procedure: given
an integer~$a$, we get the~\ath\ path of~$\mathcal D_{n,i,j,\ell}$ for the
ranking implied by~\eqref{D} in a divide-and conquer fashion
(Algorithm~\ref{alg:rh}). Thus, we can get the~\ath\ element of~$\mathcal
R_{n,h} = \U^h\D\cdot\mathcal D_{h,h-1,0,2n-h-1}$.

\begin{algorithm}[ht]
\caption{$a$\nobreakdashes-th path of~$\mathcal D_{h,i,j,\ell}$} \label{alg:rh}
\begin{algorithmic}[1]
\If{$n = 1$}
 \If{$j = i+1$}
  \State \Return $\U$
 \ElsIf{$j = i-1$}
  \State \Return $\D$
 \EndIf
\Else
 \For{$k = 0,\dotsc,h-1$ \textbf{if} $k \equiv \floor{\ell/2} + i \bmod 2$}
  \State $c_1\gets d_{h,i,k,\floor{\frac\ell2}}$
  \State $c_2\gets d_{h,k,j,\ceil{\frac\ell2}}$
  \If{$a < c_1c_2$}
   \State $P_1\gets(a\bmod c_1)$-th path of $\mathcal D_{h,i,k,\floor{\ell/2}}$
   \State $P_2\gets\floor{a/c_1}$-th path of $\mathcal D_{h,k,j,\ceil{\ell/2}}$
   \State \Return $P_1P_2$
  \EndIf
  \State $a\gets a - c_1c_2$
 \EndFor
\EndIf
\end{algorithmic}
\end{algorithm}

Following the recursive method~\cite{wilf}, we could use~\eqref{D} to
precompute the numbers~$d_{h,i,j,\ell}$, but this would take
time~$O\bigl(M(n)h^3\bigr)$ (we denote by~$M(n)$ the complexity of the
multiplication of~$n$-bit integers) and space~$O(nh^2)$. We can do better by
using the following form~\cite[Theorem~10.11.1]{krattenthaler} for~$i\le j$:
\begin{equation} \label{d}
d_{h,i,j,\ell} = \left[z^{\strutfrac{\ell-j+i}{2}}\right]\frac{F_i(z)F_{h-1-j}(z)}{F_h(z)}
\text.
\end{equation}
This can be computed using Bostan and Mori's algorithm~\cite{bostan}. This
algorithm works by computing a \emph{slice} of~$h$ consecutive coefficients
of~$1/F_h(z)$ using~$\log\ell$ multiplications of polynomials of degree~$h$.
From this slice, one can compute~$d_{h,i,j,\ell}$ using~$h$ integer
multiplications. We can compute the slice only once for every value of~$\ell$
needed.

\begin{theorem} \label{thm:randD}
Algorithm~\ref{alg:rh} outputs the~$a$th path of~$\mathcal R_{n,h}$ in
time~$O\bigl(nhM(h)\bigr)$.
\end{theorem}

\begin{proof}
The correction of the algorithm is a direct consequence of~\eqref{D}. To prove
the complexity, we first note that computing a slice of coefficients
of~$1/F_h(z)$ for the length~$\ell = 2n-h-1$ costs~$O\bigl(M(nh)\log n\bigr)$;
the costs for the lengths~$\ell/2$, $\ell/4$, etc.\ are negligible as the size
of the integers is halved each step. Since~$h < n$ and~$M(n)$ is subquadratic,
this is smaller than~$O\bigl(nhM(h)\bigr)$.

Excluding the recursive calls, computing the values~$c_1$ and~$c_2$
takes, in the worst case, $h^2$ multiplications of an~$n$\nobreakdash-bit by
an~$h$\nobreakdash-bit integer; as such a multiplication
costs~$O\bigl(nM(h)/h\bigr)$, we the result by the Master Theorem.
\end{proof}

\subsection{General rushed paths}

To sample a path of~$\mathcal R_n$, we need to sample a height with the
correct distribution. This can be done by simply computing the values~$r_n$
and~$r_{n,h}$ for~$1\le h\le n$, but this is much too expensive. Instead, we
use a method in the spirit of~\cite{denise}, relying on intervals computed and
refined on demand using floating-point arithmetic. The method is conceptually
simpler than~\cite{denise} and the more recent work~\cite{bodini}, as no
branching is involved and we are considering the numbers~$r_{n,h}$
individually and not their generating function. It could function just as well
in similar contexts.

In the following, we shorten the notation to~$r := r_n$ and~$r_h := r_{n,h}$.
We consider intervals~$[r_h^-,r_h^+]$ containting~$r_h$, with a default
interval~$[0,2^{2n-h-1}]$ (we say that the interval is \emph{uninitialized} if
it still has this value). This determines an upper bound~$r^+$ for~$r$, equal
to the sum of the~$r_h^+$. Assume that we are able to compute an initial
interval~$[r_h^-,r_h^+]$ and refine it on demand, so that the interval
ultimately comes to a single point~$r_h$. Then we can sample a uniform rushed
path using Algorithm~\ref{alg:rushed}.

\begin{algorithm}[ht]%
\caption{Uniform random rushed path of length~$2n$} \label{alg:rushed}
\begin{algorithmic}[1]
\Require an array of intervals~$[r_h^-,r_h^+]$, initially empty
\Require a bound~$r^+$, initially~$2^{2n-1} - 2^{n-1}$
\State $b\gets$ random integer between~$0$ and~$r^+-1$
\State $a\gets b$
\For{$h = 1,\dotsc,n$}
 \If{$[r^-_h,r^+_h]$ is uninitialized}
  \State compute the initial interval $[r^-_h,r^+_h]$
  \State $r^+\gets r^+ - 2^{2n-h-1} + r^+_h$
  \If{$b\ge r^+$}
   \State start over
  \EndIf
 \EndIf
 \If{$a < r^+_h$}
  \While{$a\in [r^-_h,r^+_h)$}
   \State $r\gets r - r^+_h$
   \State refine the interval $[r^-_h,r^+_h]$
   \State $r\gets r + r^+_h$
  \EndWhile
  \If{$a \ge r^+_{h}$}
   \State start over
  \Else
   \State $P\gets a$-th path of $\mathcal D_{h,h-1,0,n-h-1}$
   \State \Return $\U^h\D\cdot P$
  \EndIf
 \EndIf
 \State $a\gets a - r^+_h$
\EndFor
\end{algorithmic}
\end{algorithm}

To keep things simple, one could do all the computations in exact values
instead of intervals. In this case, lines~10 to~16 are never used.

To do the computations with intervals, we used the Arb module~\cite{johansson}
of the FLINT library~\cite{flint}, which provides for ball arithmetic
(floating-point values with a rigorous bound on the error) with arbitrary
precision. We set a size for the mantissa (e.g., 32 bits), run Bostan and
Mori's algorithm, and hope that the relative radius of the resulting ball
remains small. To refine the interval, we double the width of the mantissa and
run the computation again.

Unfortunately, the fast algorithms for polynomial multiplication (e.g.,
Karatsuba's or Schönage--Strassen's) are not numerically stable, so the radius
explodes and we do not get anything useful.  We used the naïve multiplication
instead, but ways to make it work with fast multiplication can be found
in~\cite{vanderhoeven}. Experimentally, it seems that in order to get~$p$ bits
of precision in the final balls, we need to use a mantissa of~$p+O(\log n)$
bits.

\begin{theorem} \label{thm:randR}
Algorithm~\ref{alg:rushed} outputs a uniformly distributed path of~$\mathcal
R_n$. If we only do exact computations, picking a height takes
time~$O\bigl(n^{1/3}M(n^{4/3})\log n\bigr)$ with high probability. With ball
arithmetic, if computing~$p$ bits of precision requires~$p+O(\log n)$ bits of
mantissa, the complexity is~$O(n\log^3n)$.
\end{theorem}

With exact computations, picking a height is slower than unranking the path
with Algorithm~\ref{alg:rh}, which takes~$O\bigl(n^{4/3}M(n^{1/3})\bigr)$.
With the Schönage--Strassen algorithm, we still just achieve subquadratic
complexity. With intervals, however, the cost of picking a height becomes
negligible.

\begin{proof}
First, every time the algorithm starts over, it either initialized or refined
an interval. Since the intervals ultimately come to a point, this means that
the algorithm always terminates.

To show that the output is uniform, we note that, for any rushed path~$P$,
there is exactly one value of~$b$ that entails outputting~$P$ without
starting over; namely, if~$P$ is the~\ath\ path of~$\mathcal R_{n,h}$, the
number~$b$ should be~$r_1^+ + \dotsb + r_{h-1}^+ + a$, where the~$r_i^+$ are
the existing bounds if they are initialized and the initial bounds if not.
This shows uniformity.

To show the complexity, we first estimate how many of the terms~$r_h$ need to
be computed. If~$m$ terms have been initialized, the bound~$r^+$ is, at most,
$r + 2^{2n-m-1}$. According to~\eqref{r}, this shows that~$r^+$ becomes very
close to~$r$ when~$m = O(n^{1/3})$. Thus, with high probability, we only need
to initialize~$O(n^{1/3})$ terms.

Using Bostan and Mori's algorithm, computing~$r_h$ takes~$O(\log n)$
multiplications of polynomials of degree~$O(n^{1/3})$. If we do exact
computation, the coefficients have~$O(n)$ bits, which
gives~$O\bigl(n^{1/3}M(n^{4/3})\log n\bigr)$. If we do ball arithmetic,
having~$O(\log n)$ bits of precision on~$O(n^{1/3})$ coefficients is enough to
ensure that the probability of starting over tends to zero. Thus, we need
only~$O(\log n)$ bits of mantissa. With the naïve multiplication, the
complexity is~$O(n\log^3n)$.
\end{proof}

Sample rushed paths and their progressive images are shown in
Figure~\ref{fig:random}.

\begin{figure}[ht]\hfill%
\renewcommand\ustep{ -- ++(.01,.01) coordinate (here)}%
\renewcommand\dstep{ -- ++(.01,-.01) coordinate (here)}%
\tikzstyle{small paths}=[scale=.28,yscale=10,line width=.05pt,baseline=0cm]%
\begin{tikzpicture}[small paths]
\input{1r.tex}
\begin{scope}[yshift=-.5cm]
\input{2r.tex}
\end{scope}
\begin{scope}[yshift=-1cm]
\input{3r.tex}
\end{scope}
\end{tikzpicture}%
\hfill\hfill%
\begin{tikzpicture}[small paths]
\input{1p.tex}
\begin{scope}[yshift=-.5cm]
\input{2p.tex}
\end{scope}
\begin{scope}[yshift=-1cm]
\input{3p.tex}
\end{scope}
\end{tikzpicture}\hfill\hspace{0pt}%
\caption{Left: three random rushed paths of length~$2000$. Right: the
images of these paths by the bijection~$G$. Colors are the same as in
Figure~\ref{fig:G}.} \label{fig:random}
\end{figure}

\section{Perspectives} \label{sec:perspectives}

Many interesting problems remain on this topic. First, classes of Dyck paths
similar to progressive paths may exhibit similar asymptotical behavior: OEIS
entry \oeis{A287776} describes paths one could name \emph{doubly
progressive}---both left-to-right and right-to-left progressive. The
definition of progressive paths also naturally extends to unconstrained paths:
a path is progressive if, before its first visit at every
height~$i\not\in\{0,1\}$, it visits twice either~$i-1$ or~$i+1$. There are
many possible variations and it would be interesting to see which can be
enumerated and if results like~\eqref{r} and~\eqref{normal} hold.

Dyck paths are also linked to directed animals on the triangular lattice, via
bijections with heaps of dimers~\cite{viennot,rechnitzer}. Adopting the
convention that animals grow towards the right side, we say that a directed
animal is \emph{acute} if, for every site at ordinate~$y\ne0$, there are at
least two sites to the left with ordinate~$y-1$ or~$y+1$. Progressive paths
are in bijection with acute half-animals (all sites above the~$x$-axis, see
Figure~\ref{fig:anim}). Full acute animals seem harder to enumerate.

\begin{figure}[ht]
\hfill%
\begin{tikzpicture}[paths]
\foreach \y in {0,...,4} { \draw[help lines] (0,\y) -- ++(30,0); }
\draw[path] (0,0) \walk uduududuuddduduuuudduddduduudd;;
\end{tikzpicture}%
\hfill\hfill%
\begin{tikzpicture}[triang]
\begin{scope}
\clip [xslant=-.5] (-.1,-.1) rectangle (8,3.1);
\draw[help lines] (0,0) grid (8,3);
\foreach \x in {0,...,3} { \draw[help lines] (\x,0) -- (0,\x); }
\foreach \x in {4,...,10} { \draw[help lines] (\x,0) -- ++(-3,3); }
\end{scope}
\foreach \p in
{(0,0),(1,0),(1,1),(2,1),(3,1),(3,2),(4,0),(5,0),(5,1),(5,2),(5,3),(6,0),(6,2),(7,0),(7,1)}
{ \node [site,minimum size=3.5pt] at \p {}; }
\end{tikzpicture}%
\hfill\mbox{}%
\caption{Left: a progressive path. Right: the corresponding acute half-animal:
for every site not on the bottom row, there are at least two sites to the left
in the row just below.} \label{fig:anim}
\end{figure}

Finally, one may want to go beyond Dyck paths, to Motzkin paths, Schröder
paths, $m$-Dyck paths, or more general models~\cite{banderier,mbm}, as well as
directed animals on different lattices. It is not clear, however, which of the
many possibilities is the ``right'' extension of the definition of progressive
paths. The adaptation of the asymptotic results may also prove challenging.

\bibliographystyle{abbrv}
\bibliography{biblio}{}

\end{document}